\newcommand{\R}{\mathbb{R}}
\newcommand{\Q}{\mathbb{Q}}
\newcommand{\N}{\mathbb{N}}
\newcommand{\Z}{\mathbb{Z}}
\newcommand{\mx}[1]{[#1]}
\newtheorem{theorem}{Theorem}
\newtheorem{lemma}[theorem]{Lemma}
\newtheorem{remark}[theorem]{Remark}
\title{Bilevel linear optimization belongs to NP and\\ admits polynomial-size KKT-based reformulations}
\date{\today}
\author{Christoph Buchheim, Fakult\"at f\"ur Mathematik,\\ Technische Universit\"at Dortmund, Germany\\ \texttt{christoph.buchheim@math.tu-dortmund.de}}
\begin{document}

\maketitle

\begin{abstract}
  It is a well-known result that bilevel linear optimization is
  NP-hard. In many publications, reformulations as mixed-integer
  linear optimization problems are proposed, which suggests that the
  decision version of the problem belongs to NP. However, to the best
  of our knowledge, a rigorous proof of membership in NP has never
  been published, so we close this gap by reporting a simple but not
  entirely trivial proof. A related question is whether a large enough
  ``big M'' for the classical KKT-based reformulation can be computed
  efficiently, which we answer in the affirmative. In particular, our
  big M has polynomial encoding length in the original problem data.
\end{abstract}

\section{Introduction}

We consider a general bilevel optimization problem with linear
constraints at both the upper and lower level. Given as input the data
$c\in\Q^n$, $d,q\in\Q^m$, $A\in\Q^{k\times n}$, $B\in\Q^{k\times m}$,
$a\in\Q^k$, $T\in\Q^{r\times n}$, $W\in\Q^{r\times m}$, and
$h\in\Q^r$, such a problem can be written as
\begin{equation}\label{bpp}\tag{BLP}
\begin{array}{ll}
  \min & \, c^\top x+d^\top y\\
  \textnormal{~s.t.} & Ax+By = a,~x\ge 0\\
  & y \in \text{argmin}\;\{ q^\top \bar y\mid Tx+W\bar y=h,~\bar y\ge 0\}\;.
\end{array}
\end{equation}
In this formulation, we implicitly assume the \emph{optimistic}
scenario: whenever the lower level problem
\begin{equation}\label{bpp-ll}\tag{LL}
\begin{array}{ll}
  \min & \, q^\top y\\
  \textnormal{~s.t.} & T\bar x+Wy=h,~y\ge 0\;,
\end{array}
\end{equation}
for some given upper level choice~$\bar x\ge 0$, has more than one
optimal solution satisfying the \emph{coupling constraints}~$A\bar
x+By=a$, we assume that one minimizing~$d^\top y$ is chosen, i.e., a
best possible one for the upper level problem. In the
\emph{pessimistic scenario}, it is instead assumed that the chosen~$y$
\emph{maximizes}~$d^\top y$ over all optimal solutions
of~\eqref{bpp-ll}. For a further discussion of the optimistic and the
pessimistic scenario as well as structural properties of~\eqref{bpp}
and~\eqref{bpp-ll}, as a parametric optimization problem, we refer
to~\cite[Chapters~2 and~3]{Dempe} and the references therein.

In the optimistic formulation~\eqref{bpp}, any~$\bar x\ge 0$ that
renders the lower level problem~\eqref{bpp-ll} infeasible or unbounded
is an infeasible choice for the upper level by definition, and the
same is true when no optimal solution of~\eqref{bpp-ll} satisfies the
coupling constraints. In the pessimistic case, it is natural to assume
that~$\bar x$ is feasible only if \emph{all} optimal choices of
~\eqref{bpp-ll} satisfy the coupling constraints. This is in line with
the interpretation that the follower, i.e., the optimizer
of~\eqref{bpp-ll}, is an adversary to the leader, i.e., the optimizer
of~\eqref{bpp}, in the pessimistic case.

Problem~\eqref{bpp} is known to be strongly
NP-hard~\cite{hansen92}. Essentially, one can model implicit binarity
constraints for the upper level variables~$x$ by an appropriate lower
level construction~\cite{audet97}, so that every single-level binary
linear optimization problem can be represented in the
form~\eqref{bpp}: for each binary variable~$x_i$, a new variable~$y_i$
together with constraints $y_i\le x_i$ and~$y_i\le 1-x_i$ and the
objective $\max y_i$ are introduced on the lower level. This
implies~$y_i=\min\{x_i,1-x_i\}$ for every bilevel-feasible
solution. On the upper level, a constraint~$y_i=0$ is added,
then~$\min\{x_i,1-x_i\}=0$ and hence~$x_i\in\{0,1\}$. To avoid the
coupling constraint, one may also minimize~$y_i$ in the upper level
and add linear constraints~$x_i\in[0,1]$. Vicente et
al.~\cite{vicente94} show that even checking local optimality of a
given feasible solution to~\eqref{bpp} is NP-hard.

The complexity of~\eqref{bpp} is further investigated by
Deng~\cite{deng98}. Besides discussing hardness results and
efficiently solvable special cases, he claims that bilevel linear
optimization is NP-easy, i.e., can be Turing-reduced to some
NP-complete problem~\cite[Theorem~6.2]{deng98}. However, instead of a
proof, only a reference to an unpublished (and untitled) manuscript is
given. This result is related to, but weaker than the assertion that
the decision version of~\eqref{bpp} belongs to~NP.  To the best of our
knowledge, no rigorous proof of NP-membership has appeared in the
literature yet, a gap that we will fill in Section~\ref{sec_np} below.

One of the classical approaches for solving Problem~\eqref{bpp} in
practice is to reformulate it as a mixed integer linear optimization
problem, using the Karush-Kuhn-Tucker (KKT) conditions to model
optimality for the lower level problem~\cite{fortuny-amat81}. More
precisely, in a first step, one can rewrite~\eqref{bpp} equivalently
as
\begin{equation}\label{dual}\tag{BLP-KKT}
\begin{array}{ll}
  \min & \, c^\top x+d^\top y\\
  \textnormal{~s.t.} & Ax+By = a,~x\ge 0\\
  & Tx+Wy=h,~y\ge 0\\
  & W^\top\lambda \le q\\
  & y^\top (q-W^\top\lambda)=0\;.
\end{array}
\end{equation}
The main difficulty here is the non-linear complementarity constraint
$y^\top (q-W^\top\lambda)=0$. It is typically linearized by
introducing binary variables~$z_i\in\{0,1\}$ and constraints
\begin{equation}\label{bounds}
  y_i\le M_p(1-z_i),~(q-W^\top\lambda)_i\le M_dz_i
\end{equation}
for sufficiently large~$M_p,M_d\in\R$. This ensures that
either~$y_i=0$ or~$(q-W^\top\lambda)_i=0$ for all~$i$.

In general, choosing~$M_p$ and~$M_d$ correctly is not a trivial task:
too small values of~$M_p$ and~$M_d$ may cut off lower level optimal
solutions of~\eqref{bpp}, as~\eqref{bounds} implies that~$y_i\le M_p$
and~$(q-W^\top\lambda)_i\le M_d$ for all~$i$. This problem is
highlighted in~\cite{pineda19}. In fact, Kleinert et
al.~\cite{kleinert20} recently showed that it is a co-NP-complete
problem to decide whether given values for~$M_p$ and~$M_d$ are
\emph{bilevel-correct}, i.e., yield an equivalent reformulation
of~\eqref{bpp}. They claim that their results \emph{``imply that there
is no hope for an efficient, i.e., polynomial-time, general-purpose
method for verifying or computing a correct big-M in bilevel
optimization unless P\,$=$\,NP''}~\cite{kleinert20}. However, it is
not true that the second part of this claim follows from the first
one: it may still be possible to compute \emph{some}
bilevel-correct~$M_p$ and~$M_d$ in polynomial time, even if verifying
bilevel-correctness for \emph{given}~$M_p$ and~$M_d$ is not
possible. Stated differently, the result of Kleinert et al.\ rules out
that the \emph{smallest possible} bilevel-correct~$M_p$ and~$M_d$ can
be found efficiently, if P\;$\neq$\;NP, but it does \emph{not} rule
out that there exist bilevel-correct~$M_p$ and~$M_d$ that can be
computed efficiently, and thus in particular have polynomial encoding
length -- even though they may be much larger than necessary.

In fact, devising an approach for constructing bilevel-correct ~$M_p$
and~$M_d$ in polynomial time is closely related to showing that (the
decision version of) Problem~\eqref{bpp} is contained in NP, i.e.,
that there exist certificates of polynomial size. It is quite natural
to define the certificate as a feasible solution to either \eqref{bpp}
or the KKT reformulation~\eqref{dual}. However, it is not obvious that
there exists such a solution of polynomial encoding length. In the
following, we first give a short proof that the decision version of
Problem~\eqref{bpp} indeed belongs to NP (Section~\ref{sec_np}) and
then show how to compute bilevel-correct~$M_p$ and~$M_d$ in polynomial
time (Section~\ref{sec_bigm}).

\section{Membership in NP}\label{sec_np}

We first show that the decision version of Problem~\eqref{bpp} belongs
to NP. For a precise definition of the class NP and the corresponding
certificates, as well as encoding lengths and other
complexity-theoretic concepts, we refer to~\cite{groetschel93}.  As is
common for optimization problems, we formally define the decision
version of Problem~\eqref{bpp} as follows:

\medskip

\underbar{Decision version of~\eqref{bpp}, optimistic scenario}\\[-4ex]
\begin{tabbing}
  Given: \, \= Problem data $c\in\Q^n$, $d,q\in\Q^m$, $A\in\Q^{k\times n}$, $B\in\Q^{k\times m}$, $a\in\Q^k$,\\
  \> $T\in\Q^{r\times n}$, $W\in\Q^{r\times m}$, $h\in\Q^r$, and a number $\alpha\in\Q$.\\[1.5ex]
  Task: \> Decide whether there exists~$\bar x\ge 0$ such that~\eqref{bpp-ll} has an optimal solution and such that\\
    \>at least one optimal solution~$\bar y$ of~\eqref{bpp-ll} satisfies~$A\bar x+B\bar y=a$ and~$c^\top \bar x+d^\top \bar y \le \alpha$.
\end{tabbing}
Note that this decision version of~\eqref{bpp} is equivalent to the
problem of determining whether~\eqref{bpp} admits any feasible
solution, since the constraint~$c^\top x+d^\top y \le \alpha$ can be
seen as an additional coupling constraint.

The idea of the following proof is to reformulate the feasibility of a
given solution to Problem~\eqref{bpp} that also satisfies~$c^\top
x+d^\top y \le \alpha$ by a system of linear constraints. However, in
order to enforce the optimality of the lower level solution, we need
to choose an optimal basis for~\eqref{bpp-ll}, which will then form
the desired certificate. In other words, the NP-hardness
of~\eqref{bpp} is only due to the required choice of the optimal lower
level basis.  An important technical ingredient in the following proof
is that the reduced costs of a given basis for~\eqref{bpp-ll} do not
depend on~$\bar x$.
\begin{theorem}\label{theo_np}
  The decision version of Problem~\eqref{bpp} in the optimistic scenario belongs to NP.
\end{theorem}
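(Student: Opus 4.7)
The plan is to take as certificate a \emph{dual-feasible basis} $\mathcal{B}$ of the lower-level constraint matrix $W$---that is, an index set of size $r$ with $W_\mathcal{B}$ non-singular and $q_j-q_\mathcal{B}^\top W_\mathcal{B}^{-1}W_j\ge 0$ for all $j\notin\mathcal{B}$---together with a rational pair $(\bar x,\bar y)$ witnessing bilevel feasibility. The decisive observation is that these reduced costs depend only on $q$, $W$, and $\mathcal{B}$, not on $\bar x$, because $\bar x$ enters \eqref{bpp-ll} only through the right-hand side $h-T\bar x$. Consequently, dual feasibility of $\mathcal{B}$ is a purely combinatorial property, and all of the bilevel difficulty is isolated in the nondeterministic guess of $\mathcal{B}$.

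The verifier then performs the following polynomial-time checks on the certificate $(\mathcal{B},\bar x,\bar y)$: (i) $|\mathcal{B}|=r$ and $W_\mathcal{B}$ is non-singular, via Gaussian elimination; (ii) the reduced costs above are non-negative; (iii) $\bar x,\bar y\ge 0$ and $T\bar x+W\bar y=h$, so that $\bar y$ is primal feasible for \eqref{bpp-ll} at $\bar x$; (iv) $\bar y_j=0$ whenever the reduced cost at $j$ is strictly positive, which enforces complementary slackness against the basis dual $\lambda_\mathcal{B}=(W_\mathcal{B}^\top)^{-1}q_\mathcal{B}$; and (v) $A\bar x+B\bar y=a$ together with $c^\top\bar x+d^\top\bar y\le\alpha$. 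Conditions (i)--(iv) and LP duality certify that $\bar y$ is an optimal solution of \eqref{bpp-ll} at $\bar x$, and combined with (v) this yields a YES answer, giving soundness.

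For completeness, suppose the instance is YES, witnessed by some $(\bar x^*,\bar y^*)$. By LP duality, \eqref{bpp-ll} at $\bar x^*$ admits a dual optimal \emph{extreme point} $\lambda^*$ (after, if necessary, preprocessing $W$ to full row rank). At such a vertex of $\{\lambda:W^\top\lambda\le q\}$, $r$ of the constraints $W_j^\top\lambda\le q_j$ are tight and linearly independent, yielding an index set $\mathcal{B}$ of size $r$ with $W_\mathcal{B}$ non-singular and $\lambda_\mathcal{B}=\lambda^*$, hence automatically dual-feasible. Complementary slackness forces $\operatorname{supp}(\bar y^*)\subseteq\{j:(q-W^\top\lambda^*)_j=0\}$, so $(\mathcal{B},\bar x^*,\bar y^*)$ already passes all verifier checks, and the standard size bound on vertices of the rational polyhedron defined by (iii)--(v) for this fixed $\mathcal{B}$ produces a $(\bar x,\bar y)$ of polynomial encoding length. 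The main obstacle lies precisely in this direction: the tempting but flawed alternative is to insist that $\bar y$ itself be the basic solution $W_\mathcal{B}^{-1}(h-T\bar x)$ supported on $\mathcal{B}$, which fails when $\bar y^*$ is a non-basic optimum of \eqref{bpp-ll} at $\bar x^*$ and no basic optimum there satisfies the coupling constraint---so check (iv) is deliberately relaxed to only require complementary slackness, permitting $\operatorname{supp}(\bar y)$ anywhere in the (possibly strictly larger) complementarity set of $\lambda_\mathcal{B}$.
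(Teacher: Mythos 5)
Your proposal is correct and follows essentially the same route as the paper: the certificate is built around a basis $\mathcal{B}$ of $W$ with non-negative reduced costs w.r.t.\ $q$ (crucially independent of $x$), which turns bilevel feasibility into solvability of a polynomial-size linear system, and you correctly avoid the trap of forcing $\bar y$ to be the basic solution of $\mathcal{B}$. The only (harmless) difference is how lower-level optimality is linearized for fixed $\mathcal{B}$: you use complementary slackness against the basic dual solution $\lambda_{\mathcal{B}}=W_{\mathcal{B}}^{-\top}q_{\mathcal{B}}$, whereas the paper additionally requires $W_{\mathcal{B}}^{-1}(h-Tx)\ge 0$ and matches $q^\top y$ to the basic solution's objective value -- two equivalent LP optimality certificates.
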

\begin{proof}
  Assume that a yes-instance is given, i.e., that there exist feasible
  $x'\in\R^n$ and $y'\in\R^m$ for~\eqref{bpp} with~$c^\top x'+d^\top
  y' \le \alpha$. Let~$\cal B$ be any optimal basis for the lower
  level problem for~$x'$. Then~$\cal B$ has non-negative reduced costs
  w.r.t.~$q$ and~$(x',y')$ together satisfy the linear constraints
  \begin{subequations}\label{eq_cond}
    \begin{align}
      & c^\top x+d^\top y\le \alpha\\
      & Ax+By=a,~x\ge 0\label{eq_cond:ul}\\
      & Tx+Wy=h,~y\ge 0\label{eq_cond:ll}\\
      & q^\top y=q_{\cal B}^\top W_{\cal B}^{-1}(h-Tx)\label{eq_cond:opt}\\
      & W_{\cal B}^{-1}(h-Tx)\ge 0\;.\label{eq_cond:feas}
    \end{align}
  \end{subequations}
  The constraint \eqref{eq_cond:opt} follows from the optimality
  of~$y'$, since~$W_{\cal B}^{-1}(h-Tx')$ is the basic part of the
  optimal solution corresponding to~$\cal B$. The constraint
  \eqref{eq_cond:feas} expresses that~$\cal B$ is a feasible basis for
  the lower level problem for~$x'$.

  Conversely, assume that there exists a basis~$\cal B$ of~$W$ with
  non-negative reduced costs w.r.t.~$q$ such that~\eqref{eq_cond} is
  satisfied by some~$\bar x$ and~$\bar y$.  By~\eqref{eq_cond:feas},
  the basis~$\cal B$ is feasible for the lower level problem for~$\bar
  x$, hence also optimal because of the non-negative reduced costs
  w.r.t.~$q$. It thus follows from~\eqref{eq_cond:opt} that~$\bar y$
  is an optimal lower level solution for~$\bar x$. Together with the
  remaining constraints in~\eqref{eq_cond}, we derive that~$\bar x$
  and~$\bar y$ form a feasible solution to~\eqref{bpp} with~$c^\top
  \bar x+d^\top \bar y\le \alpha$.

  A polynomial-size certificate for a yes-instances thus consists of a
  basis~${\cal B}\subseteq\{1,\dots,r\}$ with non-negative reduced
  costs w.r.t.~$q$ such that the linear system \eqref{eq_cond} is
  solvable. The latter can be tested in time polynomial in the
  coefficients of~\eqref{eq_cond}, which are in turn polynomial in the
  coefficients of the original problem~\eqref{bpp}.
\end{proof}
Note that the basis~$\cal B$ in the above proof does not necessarily
correspond to the solution~$y'$ or~$\bar y$, if the lower level
solution is not unique. In fact, in the optimistic setting considered
here, it may happen that no basic solution of the follower's problem
is feasible. The constraints~\eqref{eq_cond:ll}
and~\eqref{eq_cond:opt} in the proof are thus needed to model the
optimal face of the lower level problem.  As a (somewhat pathological)
example, which does not contain any leader variables, consider
\begin{equation*}
\begin{array}{ll}
  \min &  y\\
  \textnormal{~s.t.} & y = 1\\
  & y \in \text{argmin}\;\{ 0 \mid \bar y\le 2,~\bar y\ge 0\}=[0,2]\;.
\end{array}
\end{equation*}
The optimal value of this problem is~$1$, since~$y=1$ is the unique
bilevel-feasible solution. However, the only two basic solutions of
the follower's subproblem are~$y=0$ and~$y=2$, which do not satisfy
the upper level constraint $y=1$. On the other hand, if there are no
upper level coupling constraints, i.e., if~$B=0$, it is easy to see
that there always exists an optimal lower level \emph{basic} solution;
see also~\cite[Theorem~2.1]{Dempe}.

We emphasize that the proof of Theorem~\ref{theo_np} does not make any
assumptions on the feasibility or boundedness of the lower level
problem~\eqref{bpp-ll}: since we start with feasible~$x'$ and~$y'$, we
know by definition that~$y'$ is an optimal lower level response to the
upper level decision~$x'$. Hence the lower level problem cannot be
unbounded or infeasible for the given~$x'$. Moreover, the second part
of the proof shows that a certificate cannot exist if there is no
upper level decision~$\bar x$ such that the lower level
problem~\eqref{bpp-ll} has an optimal solution.
    
We now turn our attention to the pessimistic case, where the follower
tries to violate the coupling constraints and, if not possible,
returns a worst-possible solution for the upper level problem. We
first define the decision version again:

\medskip

\underbar{Decision version of~\eqref{bpp}, pessimistic scenario}\\[-4ex]
\begin{tabbing}
  Given: \, \= Problem data $c\in\Q^n$, $d,q\in\Q^m$, $A\in\Q^{k\times n}$, $B\in\Q^{k\times m}$, $a\in\Q^k$,\\
  \> $T\in\Q^{r\times n}$, $W\in\Q^{r\times m}$, $h\in\Q^r$, and a number $\alpha\in\Q$.\\[1.5ex]
  Task: \> Decide whether there exists~$\bar x\ge 0$ such that~\eqref{bpp-ll} has an optimal solution and such that\\
    \>\emph{all} optimal solutions~$\bar y$ of~\eqref{bpp-ll} satisfy~$A\bar x+B\bar y=a$ and~$c^\top \bar x+d^\top \bar y \le \alpha$.
\end{tabbing}
The universal quantifier in this formulation creates some additional
difficulty: for a yes-instance, we have to make sure that~$\bar x$ is
chosen such that all optimal solutions of the lower level problem
satisfy the coupling constraints and $c^\top \bar x+d^\top \bar y \le
\alpha$. More abstractly, we thus have to enforce an inclusion
relation between two polyhedra that are both parametrized by~$\bar
x$. In the following proof, we need a second basis in our certificate
to achieve this.
\begin{theorem}\label{theo_np_pess}
  The decision version of Problem~\eqref{bpp} in the pessimistic
  scenario belongs to NP.
\end{theorem}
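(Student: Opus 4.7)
The plan is to extend the argument of Theorem~\ref{theo_np} so that the certificate witnesses not just the existence of one optimal lower-level response but an entire polyhedral inclusion. Pessimistic feasibility of $\bar x$ is equivalent to (a) the lower level \eqref{bpp-ll} having an optimum at $\bar x$, and (b) the optimal face $F(\bar x) = \{y \ge 0 : Wy = h - T\bar x,\ q^\top y = v^*(\bar x)\}$ being contained in $\{y : A\bar x + By = a,\ c^\top \bar x + d^\top y \le \alpha\}$. Condition (a) can be handled exactly as in the optimistic proof by a basis $\mathcal{B}_1$ of $W$ with non-negative reduced costs w.r.t.\ $q$ and $W_{\mathcal{B}_1}^{-1}(h - T\bar x) \ge 0$, which also pins $v^*(\bar x) = q_{\mathcal{B}_1}^\top W_{\mathcal{B}_1}^{-1}(h - T\bar x)$ as a linear expression in $\bar x$. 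Condition (b) is the new difficulty, and my plan is to discharge it via LP duality on $F(\bar x)$.

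More concretely, the inclusion in (b) breaks into $2k + 1$ one-sided bounds on $F(\bar x)$, namely $\max\{d^\top y : y \in F(\bar x)\} \le \alpha - c^\top \bar x$ and, for each coupling row $i$, $\max\{\pm B_i y : y \in F(\bar x)\} \le \pm(a - A\bar x)_i$. Each of these is a linear program over $F(\bar x)$ and, by strong LP duality, admits a basic dual feasible certificate of polynomial size. I would therefore include in the certificate the basis $\mathcal{B}_1$, a second basis $\mathcal{B}_2$ corresponding to an optimal dual solution of the follower's worst-case LP $\max\{d^\top y : y \in F(\bar x)\}$, and analogous basic dual certificates for the $2k$ coupling bounds. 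The verification system is the analogue of \eqref{eq_cond}, with the single constraint \eqref{eq_cond:opt} replaced by the dual-feasibility inequalities and duality-gap equations encoding the $2k+1$ LP bounds. For a yes-instance, such dual certificates exist by standard LP theory; conversely, weak duality turns each dual-feasibility statement into the corresponding inequality on $F(\bar x)$, jointly forcing $\bar x$ to be pessimistic-feasible.

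The hard part is precisely the universal quantifier: no tuple $(\mathcal{B}_1, \bar x, \bar y)$ with a single $\bar y$ can witness pessimistic feasibility, since a second follower response could still violate a coupling constraint or the objective bound. The resolution is that $F(\bar x)$ is itself a polyhedron whose defining inequalities are linear in $\bar x$ once $\mathcal{B}_1$, and hence $v^*(\bar x)$, is committed to the certificate, so the \emph{for all} $y \in F(\bar x)$ statement reduces via LP duality to polynomially many one-sided inequalities, each certified by a polynomial-size dual basic object. This keeps the overall verification as a linear feasibility problem of polynomial size, which is checkable in polynomial time, so the pessimistic problem stays in NP.
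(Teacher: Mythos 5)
Your proposal is correct and follows the same overall strategy as the paper: a first basis $\mathcal{B}_1$ certifies that the lower level has an optimum and pins the optimal value $v^*(\bar x)=q_{\mathcal{B}_1}^\top W_{\mathcal{B}_1}^{-1}(h-T\bar x)$ as a linear function of $\bar x$, and the universal quantifier over the optimal face $F(\bar x)$ is then reduced to finitely many one-sided LP bounds, each certified by a further basis, so that verification becomes a linear feasibility problem in $x$ alone. Two remarks on where you diverge from the paper. First, the paper uses a \emph{single} second basis $\hat{\mathcal B}$ of $\hat W=\binom{W}{q^\top}$: it is required to be primal feasible and to have non-positive reduced costs w.r.t.\ $d$, and the $2k$ coupling bounds come for free because $b^i y$ is constant on $F(\bar x)$, so the reduced costs of $\hat{\mathcal B}$ w.r.t.\ each $b^i$ vanish automatically and yield the equalities \eqref{eq_cond_inci}; your version carries $2k+1$ separate dual basic certificates, which is a larger but still polynomial certificate, and weak duality replaces the paper's ``feasible hence optimal basis'' argument in the converse direction (so you can even drop primal feasibility of those bases). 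Second, and this is the one point you should make explicit: the dual route is safe \emph{only} because you commit each dual solution to a basis, so that $(\mu,\nu)=\hat W_{\hat{\mathcal B}}^{-\top}d_{\hat{\mathcal B}}$ is a constant vector and the dual objective $(h-Tx)^\top\mu+v^*(x)\nu$ stays linear in $x$. If instead the dual variables were left as free variables of the verification system, the products with $x$ on the right-hand side of \eqref{eq:opt} would make the system bilinear --- exactly the trap the paper warns about in the paragraph following the proof of Theorem~\ref{theo_np_pess}. Since your certificate fixes the bases, you avoid this, and the argument goes through.
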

\begin{proof}
  Assume that a yes-instance is given. Then, in particular, some~$\bar
  x\ge 0$ exists such that~\eqref{bpp-ll} has an optimal
  solution. This means that there exists a basis~$\cal B$ of~$W$ with
  non-negative reduced costs w.r.t.~$q$ such that the corresponding
  basic solution is feasible, i.e., such that~$\bar x$ satisfies
  \begin{equation}\label{eq_cond_pess}
    W_{\cal B}^{-1}(h-Tx)\ge 0,~x\ge 0\;.
  \end{equation}
  Moreover, every optimal solution~$y$ of~\eqref{bpp-ll} satisfies
  both~$A\bar x+By=a$ and~$c^\top \bar x+d^\top y \le \alpha$.  In
  other words, we have that all solutions~$y$ of
  \begin{equation}\label{eq:opt}
    T\bar x+Wy=h,~q^\top y=q_{\cal B}^\top W_{\cal B}^{-1}(h-T\bar x),~y\ge 0
  \end{equation}
  also satisfy
  $$A\bar x+By=a,~c^\top\bar x+d^\top y \le \alpha\;.$$
  In particular, we deduce
  \begin{equation}\label{eq:inctest}
    \max_{\eqref{eq:opt}} \; d^\top y \le -c^\top \bar x+\alpha\;,
  \end{equation}
  and this maximization problem has an optimal solution since it is
  bounded and~\eqref{eq:opt} is feasible. Since~\eqref{eq:opt} is in
  standard form, there exists a
  basis~$\hat {\cal B}$ for~$\hat W:=\binom{W}{q^\top}$
  with non-positive reduced costs w.r.t.~$d$ such
  that~$\bar x$ satisfies the linear constraints
  \begin{subequations}\label{eq_cond_inc0}
    \begin{align}
      & \textstyle d_{\hat{\cal B}}^\top \hat W_{\hat{\cal B}}^{-1}\binom{h-Tx}{q_{\cal B}^\top W_{\cal B}^{-1}(h-Tx)} \le -c^\top x+\alpha \label{eq_cond_inc0:f1}\\[1ex]
      & \textstyle \hat W_{\hat{\cal B}}^{-1}\binom{h-Tx}{q_{\cal B}^\top W_{\cal B}^{-1}(h-Tx)}\ge 0\;.\label{eq_cond_inc0:f2}
    \end{align}
  \end{subequations}
  Indeed, constraint~\eqref{eq_cond_inc0:f2} states that the
  basis~$\hat{\cal B}$ is feasible for~\eqref{eq:opt}, while
  constraint~\eqref{eq_cond_inc0:f1} states that the corresponding
  basic solution~$y$ satisfies the constraint~$c^\top \bar x+d^\top
  y\le\alpha$.

  Proceeding to the coupling constraints~$A\bar x+By=a$, let~$b^i$
  and~$a^i$ denote the $i$-th row of~$A$ and~$B$, respectively, and
  let~$a_i$ denote the $i$-th entry of~$a$. Then, for
  all~$i=1,\dots,k$, since~$b^iy=-a^i\bar x+a_i$ for all~$y$
  satisfying~\eqref{eq:opt}, there exist bases~$\hat{\cal B}_i^+$
  and~$\hat{\cal B}_i^-$ for~$\hat W$ that have
  non-negative and non-positive reduced costs, respectively, w.r.t.~$(b^i)^\top$ such that~$\bar x$ satisfies the linear constraints
  \begin{subequations}\label{eq_cond_inci}
    \begin{align}
      & \textstyle b^{i}_{\hat{\cal B}_i^+} \hat W_{\hat{\cal
        B}_i^+}^{-1}\binom{h-Tx}{q_{\cal B}^\top W_{\cal
        B}^{-1}(h-Tx)} \ge -a^i x+a_i \ge
       \textstyle b^{i}_{\hat{\cal B}_i^-} \hat W_{\hat{\cal
        B}_i^-}^{-1}\binom{h-Tx}{q_{\cal B}^\top W_{\cal
        B}^{-1}(h-Tx)}   \\[1ex]
      & \textstyle \hat W_{\hat{\cal
        B}_i^+}^{-1}\binom{h-Tx}{q_{\cal B}^\top W_{\cal
        B}^{-1}(h-Tx)}, \hat W_{\hat{\cal
        B}_i^-}^{-1}\binom{h-Tx}{q_{\cal B}^\top W_{\cal
        B}^{-1}(h-Tx)} \ge 0\;.
    \end{align}
  \end{subequations}
  To sum up, for any yes-instance, there exist a basis~$\cal B$ of~$W$
  with non-negative reduced costs w.r.t.~$q$, a basis~$\hat{\cal B}$
  of~$\hat W$ with non-positive reduced costs w.r.t~$d$, and
  bases~$\hat{\cal B}_i^+$ and~$\hat{\cal B}_i^-$ of~$\hat W$ with
  non-negative and non-positive reduced costs, respectively,
  w.r.t.~$(b^i)^\top$, for~$i=1,\dots,k$, such that all linear
  constraints on~$x$ in~\eqref{eq_cond_pess}, \eqref{eq_cond_inc0},
  and~\eqref{eq_cond_inci} can be satisfied simultaneously.

  Conversely, if such bases exist, the instance is a
  yes-instance. Indeed, by~\eqref{eq_cond_pess} the corresponding
  feasible solution~$\bar x$ admits an optimal response.
  From~\eqref{eq_cond_inc0} it follows that
  $$\max_{\eqref{eq:opt}} \; d^\top y=\textstyle d_{\hat{\cal B}}^\top
  \hat W_{\hat{\cal B}}^{-1}\binom{h-T\bar x}{q_{\cal B}^\top W_{\cal
      B}^{-1}(h-T\bar x)} \le -c^\top \bar x+\alpha\;,$$ as~$\hat{\cal
    B}$ is a feasible and hence optimal basis for the given
  maximization problem. Thus~$c^\top\bar x+d^\top y\le\alpha$ for
  all~$y$ feasible for~\eqref{eq:opt}.  From~\eqref{eq_cond_inci} we
  similarly derive~$A\bar x+By=a$ for all such~$y$. The result thus
  follows with the certificate consisting of the bases~${\cal B}$,
  $\hat{\cal B}$, and~$\hat{\cal B}_i^+,\hat{\cal B}_i^-$
  for~$i=1,\dots,k$.
\end{proof}
We emphasize that, different from the proof of Theorem~\ref{theo_np},
the linear constraints constructed in the proof of
Theorem~\ref{theo_np_pess} do not contain the lower level variables,
they only restrict the upper level decision. This is in line with the
fact that, in the pessimistic case, the lower level variables appear
in a universal instead of an existential quantifier. In order to deal
with this ``adversarial'' quantifier, the additional~$1+2k$ bases are
needed in the certificate.

One may be tempted to simplify the above proof by replacing the
maximization problem in~\eqref{eq:inctest} by its dual and then
resolving the resulting minimization problem by introducing additional
variables. However, this approach would lead to a non-linear model,
because the new variables would be multiplied by the variables~$x$
appearing on the right hand side of~\eqref{eq:opt}. For the resulting
quadratic system, testing feasibility would not be trivially
polynomial any more.

\section{Computation of bilevel-correct bounds}\label{sec_bigm}

For sake of simplicity, we focus on the optimistic scenario in the
remainder of this paper. From an abstract point of view, it already
follows from Theorem~\ref{theo_np} that there exists a polynomial-time
algorithm that reformulates any bilevel linear optimization problem
into an equivalent mixed-integer linear optimization problem. Indeed,
as mixed-integer linear optimization is NP-hard, any decision problem
in NP can be polynomially reduced to it. We now show that this task
can actually be achieved by the KKT-reformulation~\eqref{dual}, after
deriving bilevel-correct bounds.  In fact, the proof of
Theorem~\ref{theo_np} already implies that we can compute a bound on
the entries of~$x$ and~$y$ in polynomial time such that this bound
does not cut off all optimal solutions. We now make this bound more
explicit and extend this statement to the dual variables.  For the
convenience of the reader, we first report the following elementary
result. We will denote by~$\mx X$ the maximal absolute value of any
entry in an integer matrix or vector~$X$.
\begin{lemma}\label{ppoly2}
  Let~$P=\{x\in\R^n\mid Ax=b,\,x\ge 0\}$ be a polyhedron
  with~$A\in\Z^{m\times n}$ and $b\in\Z^m$. Let~$\bar x$ be a vertex
  of~$P$. Then $\bar x_i\le m!\, \mx b \, \mx A^{m-1}$ for
  all~$i=1,\dots,n$.
\end{lemma}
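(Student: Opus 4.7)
The plan is to exploit the basic feasible solution characterization of vertices and then bound the resulting Cramer's-rule expressions. Specifically, every vertex $\bar x$ of $P$ is a basic feasible solution, so there exists an index set $\mathcal B\subseteq\{1,\dots,n\}$ such that the columns of $A_{\mathcal B}$ are linearly independent, $\bar x_i = 0$ for $i\notin\mathcal B$, and $A_{\mathcal B}\,\bar x_{\mathcal B}=b$. After discarding any redundant rows of $A$ we may assume $A_{\mathcal B}$ is square and invertible of some order $m'\le m$, so $\bar x_{\mathcal B}=A_{\mathcal B}^{-1}b$. For indices $i\notin\mathcal B$ the claimed bound is trivial.

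For $i\in\mathcal B$, I would apply Cramer's rule to write
$$\bar x_i \;=\; \frac{\det\bigl(A_{\mathcal B}^{(i)}\bigr)}{\det(A_{\mathcal B})},$$
where $A_{\mathcal B}^{(i)}$ is obtained from $A_{\mathcal B}$ by replacing the column indexed by $i$ with the vector $b$. Since $A_{\mathcal B}$ is an invertible integer matrix, its determinant is a nonzero integer, hence $|\det(A_{\mathcal B})|\ge 1$. It therefore suffices to upper-bound the numerator.

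To bound $|\det(A_{\mathcal B}^{(i)})|$, I would use cofactor expansion along the column that now contains $b$: this yields $m'$ terms, each of the form $\pm b_k\cdot\det(M_{ki})$, where $M_{ki}$ is an $(m'-1)\times(m'-1)$ minor built from entries of $A$. Applying the Leibniz formula to each such minor gives $|\det(M_{ki})|\le (m'-1)!\,\mx A^{m'-1}$, because the sum has $(m'-1)!$ terms, each a product of $m'-1$ entries of absolute value at most $\mx A$. Combining these estimates yields
$$|\bar x_i| \;\le\; |\det(A_{\mathcal B}^{(i)})| \;\le\; m'\cdot \mx b \cdot (m'-1)!\,\mx A^{m'-1} \;=\; m'!\,\mx b\,\mx A^{m'-1} \;\le\; m!\,\mx b\,\mx A^{m-1},$$
where the last inequality uses $m'\le m$ and (without loss of generality) $\mx A\ge 1$.

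There is no real obstacle here; the only point that needs care is the reduction to a square invertible $A_{\mathcal B}$ when $A$ is row-rank-deficient, and the choice of determinant bound. I prefer the Leibniz-formula bound over Hadamard's inequality because it produces precisely the factor $m!$ appearing in the statement, whereas Hadamard would yield an $m^{m/2}$ factor that would then need to be loosened.
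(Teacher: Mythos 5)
Your proposal is correct and follows essentially the same route as the paper: identify the vertex with a basic feasible solution after reducing to full row rank, apply Cramer's rule with $|\det(A_{\mathcal B})|\ge 1$ by integrality, and bound the numerator determinant via the Leibniz formula (your cofactor expansion along the $b$-column is just a regrouping of the same $m!$-term sum). The only cosmetic difference is that you track the reduced row count $m'$ explicitly and note the need for $\mx{A}\ge 1$ in the final monotonicity step, which the paper glosses over.
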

\begin{proof}
  By removing redundant constraints, we may assume that~$A$ has full
  row rank.  Let~$\cal B$ be a basis yielding~$\bar x$. Then~$\bar
  x_i=0$ for~$i\not\in{\cal B}$ and $x_{\cal B}=A_{\cal
    B}^{-1}b$. Hence, for~$i\in{\cal B}$, we have $|\bar x_i|\le
  |\det(A^{i}_{\cal B})|$ by Cramer's rule, where~$A^{i}_{\cal B}$
  arises from~$A_{\cal B}$ by replacing column~$i$ by~$b$; note
  that~$|\det(A_{\cal B})|\ge 1$ by integrality and regularity
  of~$A_{\cal B}$. Hence
  $$\textstyle |\bar x_i|\le |\sum_{\sigma\in
    S_m}\text{sgn}(\sigma)\prod_{j=1}^m (A^{i}_{\cal
    B})_{j,\sigma(j)}|\le \sum_{\sigma\in S_m}\prod_{j=1}^m
  |(A^{i}_{\cal B})_{j,\sigma(j)}|\;,$$
  which directly implies the result.
\end{proof}
Note that the number~$m!$ appearing in the bound above has polynomial
encoding length in~$m\in\N$, since~$m!\le m^m$ and the latter has
encoding length at most~$m(\lceil\log (m+1)\rceil+1)$. In particular,
the naive algorithm to compute~$m!$ runs in polynomial time, but there
exist faster algorithms performing this task in~$O(m\log^2 m)$
time~\cite{harvey21,schoenhage94}.
\begin{theorem}\label{theo_md}
  Bilevel-correct values for~$M_p$ and~$M_d$ can be computed in
  polynomial time from the data of Problem~\eqref{bpp}.
\end{theorem}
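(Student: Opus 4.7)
The plan is to combine the certificate structure from the proof of Theorem~\ref{theo_np} with the vertex bound of Lemma~\ref{ppoly2}. By that proof, whenever Problem~\eqref{bpp} has a feasible solution, there is a basis~$\mathcal{B}$ of~$W$ with non-negative reduced costs w.r.t.~$q$ such that the polyhedron~$P_{\mathcal{B}}$ defined by~\eqref{eq_cond} is non-empty, and minimizing $c^\top x + d^\top y$ over the union of the~$P_{\mathcal{B}}$ recovers the bilevel-optimal value. Any finite bilevel optimum is therefore attained at a vertex of some~$P_{\mathcal{B}}$. I would use this vertex as the canonical primal witness for~$M_p$, and read off a matching dual multiplier from the same basis~$\mathcal{B}$ for~$M_d$.

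For~$M_p$, after rewriting $P_{\mathcal{B}}$ in standard form (introducing a slack vector for~\eqref{eq_cond:feas}) and scaling by~$\det(W_{\mathcal{B}})$ to clear the denominators arising from~$W_{\mathcal{B}}^{-1}$, I obtain an integer system whose entries admit a Hadamard-type bound purely in terms of $\mx A, \mx B, \mx T, \mx W, \mx a, \mx h, \mx q$, uniform in~$\mathcal{B}$. Lemma~\ref{ppoly2} then delivers an explicit polynomial-encoding-length upper bound on every coordinate of any vertex, which I would take as~$M_p$. For $M_d$, I would use the specific dual vector $\lambda^* := W_{\mathcal{B}}^{-\top} q_{\mathcal{B}}$: this is well-defined because~$W_{\mathcal{B}}$ is invertible, dual feasible because~$\mathcal{B}$ has non-negative reduced costs w.r.t.~$q$, and satisfies complementarity with any primal optimum by standard LP duality. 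Cramer's rule applied to the system $W_{\mathcal{B}}^\top \lambda = q_{\mathcal{B}}$ furnishes a polynomial bound on~$\|\lambda^*\|_\infty$ that is again uniform in~$\mathcal{B}$, and the corresponding bound on $\|q - W^\top \lambda^*\|_\infty$ becomes~$M_d$.

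The main obstacle is verifying bilevel-correctness, i.e., that the big-M reformulation with these values has the same optimal value as~\eqref{bpp}. One direction is immediate: any $0/1$-assignment~$z$ together with~\eqref{bounds} still forces $y_i(q - W^\top\lambda)_i = 0$, so every big-M-feasible solution is bilevel-feasible with the same objective, regardless of the specific values of~$M_p$ and~$M_d$. For the converse, given a bilevel-optimal~$(x^*, y^*)$ chosen as a vertex of some~$P_{\mathcal{B}}$, I would set $z_i := 1$ iff $y_i^* = 0$ and check that the triple $(x^*, y^*, \lambda^*)$ together with this~$z$ fits within the bounds in~\eqref{bounds}, with complementarity automatic from the optimality of~$\mathcal{B}$. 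The degenerate cases -- bilevel infeasibility, unboundedness of~\eqref{bpp}, or no~$\bar x$ admitting a finite lower-level response -- should be handled separately, but all transfer between \eqref{bpp} and the reformulation under any~$M_p, M_d \ge 0$. The most delicate technical step is keeping the Hadamard-type bounds uniform across all admissible bases, which Lemma~\ref{ppoly2} permits precisely because its estimate depends only on the dimensions and the data magnitudes, not on the basis chosen.
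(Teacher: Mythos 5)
Your proposal follows essentially the same route as the paper's proof: restrict to the polyhedron~\eqref{eq_cond} (in standard form with slacks) for the optimal basis~$\cal B$ from Theorem~\ref{theo_np}, take an optimal vertex as the primal witness bounded via Lemma~\ref{ppoly2}, use $\lambda^\ast=W_{\cal B}^{-\top}q_{\cal B}$ as the dual witness, and make both bounds uniform over bases by Cramer's rule. The only cosmetic difference is that you clear denominators by scaling with $\det(W_{\cal B})$ where the paper instead bounds the entries of $W_{\cal B}^{-1}T$ and $W_{\cal B}^{-1}h$ directly before invoking Lemma~\ref{ppoly2}; the substance is identical.
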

\begin{proof}
  We may assume that~\eqref{bpp} admits an optimal
  solution~$x',y'$. As in the proof of Theorem~\ref{theo_np}, we
  choose~$\cal B$ as an optimal basis of the lower level problem
  for~$x'$. Let~$(\bar x,\bar y,\bar z)$ be an optimal vertex of the
  polyhedron defined by
  \begin{equation}\label{eq_cond_2}
    \begin{array}{ll}
      Ax+By=a\\
      Tx+Wy=h\\
      q_{\cal B}^\top W_{\cal B}^{-1}Tx+q^\top y=q_{\cal B}^\top W_{\cal B}^{-1}h\\
      W_{\cal B}^{-1}Tx+z = W_{\cal B}^{-1}h\\
      x,y,z\ge 0\;,
    \end{array}
  \end{equation}
  with respect to the objective function~$c^\top x+d^\top y$.
  Then~$\bar\lambda:=W_{\cal B}^{-\top}q_{\cal B}$ is a dual optimal
  solution for the lower level problem for~$\bar x$.  In particular,
  we have $(h-T\bar x)^\top\bar\lambda=q^\top\bar y$, which implies
  the complementarity constraint~$\bar y^\top (q-W^\top\bar
  \lambda)=0$. In summary, the constructed solution consisting
  of~$\bar x$, $\bar y$, and~$\bar \lambda$ is feasible and hence
  optimal for~\eqref{dual}. It thus suffices to bound the entries
  of~$\bar y$ and~$q-W^\top\bar\lambda$ in terms of the problem data
  of~\eqref{bpp}, independently of the choice of~$\cal B$.  By scaling
  all constraints and objective functions, we may assume that all
  coefficients in~\eqref{bpp} are integer. This can be done in
  polynomial time, as it suffices to scale by the product of all
  numerators of these coefficients. In particular, the encoding length
  of the scaled instance is polynomial in the original encoding
  length.
  
  To obtain a valid choice of~$M_d$, we need~$M_d\ge
  (q-W^\top\bar\lambda)_i=(q-W^\top W_{\cal B}^{-\top}q_{\cal B})_i$
  for all bases~$\cal B$ of~$W$. For $i\in\cal B$ we
  have~$(q-W^\top\bar\lambda)_i=0$ by construction, while the
  remaining entries contain the reduced costs of~$\cal B$. It thus
  suffices to compute a bound on the reduced costs of any basis of~$W$
  in terms of~$W$ and~$q$. For~$i\in{\cal N}$, we have
  $$|q_i-W_{\cdot i}^\top W_{\cal B}^{-\top}q_{\cal B}| \le
  |q_i|+|W_{\cdot i}^\top W_{\cal B}^{-\top}q_{\cal B}| \le
  |q_i|+\textstyle\sum_{j\in{\cal B}} |W_{ij}||(W_{\cal B}^{-\top}q_{\cal B})_{j}|\;.$$
  Now, since~$W$ is assumed to be integer, we have $|\det(W_{\cal
    B}^\top)|\ge 1$, so by Cramer's rule, each entry of~$W_{\cal
    B}^{-\top}q_{\cal B}$ is bounded by $\det(\bar W)$, where~$\bar W$
  arises from~$W_{\cal B}$ by replacing some column by~$q_{\cal
    B}$. Hence $|(W_{\cal B}^{-\top}q_{\cal B})_{j}|\le r!\, \mx q\mx
  W^{r-1}$ and we can choose~$M_d$ as
  $$\mx q+r \mx W r!\,\mx q \mx W^{r-1}=\mx q (1+r!\,r \mx W^r)\;.$$

  To determine a bilevel-correct value for~$M_p$, it suffices to bound
  the entries of~$y$ in all vertices of~\eqref{eq_cond_2}. Using
  Lemma~\ref{ppoly2}, we can choose~$M_p$ as $\ell!\,\mx f\mx
  L^{\ell-1}$ where~$\ell:=k+r+1+m$ is the number of equations
  in~\eqref{eq_cond_2} and
  \begin{eqnarray*}
    \mx L  & := & \max\{\mx A,\mx B,\mx T,\mx W,\mx{q_{\cal B}^\top W_{\cal B}^{-1}T},\mx q,\mx{W_{\cal B}^{-1}T},1\}\\
    \mx f & := & \max\{\mx a,\mx h,\mx{q_{\cal B}^\top W_{\cal B}^{-1}h},\mx{W_{\cal B}^{-1}h}\}\;.
  \end{eqnarray*}
  We further have
  \begin{eqnarray*}
    \mx{W_{\cal B}^{-1}h} & \le & r!\, \mx W^{r-1} \mx h\\
    \mx{q_{\cal B}^\top W_{\cal B}^{-1}h} & \le & r!\,r \mx q \mx W^{r-1} \mx h \\
    \mx{W_{\cal B}^{-1}T} & \le & r!\,\mx W^{r-1} \mx T\\
    \mx{q_{\cal B}^\top W_{\cal B}^{-1}T} & \le & r!\,r \mx q \mx W^{r-1} \mx T\;.
  \end{eqnarray*}
  Altogether, we can compute bilevel-correct bounds~$M_d$ and~$M_p$ in
  polynomial time from the coefficients of~\eqref{bpp}.
\end{proof}
Note that the bound~$M_d$ derived in the proof of
Theorem~\ref{theo_md} only depends on~$W$ and~$q$, while~$M_p$ depends
on all problem data, including the upper level coefficients.  If~$r\ge
1$ and none of~$q,W,T,h$ is all zero, the bounds can be simplified to
\begin{eqnarray*}
  \mx L  & \le & \max\{\mx A,\mx B,r!\,r \mx q \mx W^{r-1} \mx T\}\\
  \mx f & \le & \max\{\mx a,r!\,r \mx q \mx W^{r-1} \mx h\}\;.
\end{eqnarray*}

Both bounds constructed in Theorem~\ref{theo_md} have polynomial
encoding length. However, their values are exponential in general. The
latter cannot be avoided, even without the bilevel structure. As a
simple example, consider constraints~$y_1=1$ and~$2y_i-y_{i+1}=0$
for~$i=1,\dots,m-1$. Then all coefficients have size at most~2, but in
the unique feasible solution, the value of~$y_n$ is~$2^{n-1}$.

\begin{remark}
  From the proof of Theorem~\ref{theo_np} it can be argued that the
  NP-hardness of~\eqref{bpp} is due to the exponential number of bases
  of the lower level problem. More precisely, even though it is
  possible to efficiently find an optimal lower level basis for any
  fixed upper level decision, since this reduces to solving a linear
  optimization problem, it is not possible to efficiently decide
  whether there exists a basis~$\cal B$ that yields a feasible
  system~\eqref{eq_cond}, unless~P\,$=$\,NP. Otherwise, it would
  follow from the proof of Theorem~\ref{theo_np} that~\eqref{bpp} is
  tractable. In terms of the bounds~$M_d$ and~$M_p$,
  Theorem~\ref{theo_np} implies that we can determine bilevel-correct
  values by considering all possible lower level bases, which is the
  core idea of the proof of Theorem~\ref{theo_md}. If, however, the
  task is to test bilevel-correctness of given values of~$M_d$
  and~$M_p$ as in~\cite{kleinert20}, we need to decide whether there
  exists a basis~$\cal B$ such that some feasible solution
  of~\eqref{eq_cond}, with~$\alpha$ being the optimal value
  of~\eqref{bpp}, has a small enough encoding length. As shown
  in~\cite{kleinert20}, the latter problem is NP-complete.  Both
  observations are related to the hardness of the problem~OVP, which
  was shown in~\cite{fukuda97} and used in the main proof
  of~\cite{kleinert20}: while it is easy to find an optimal basis for
  a given linear optimization problem, if one exists, it is hard to
  test whether an (unbounded) linear optimization problem has a basic
  solution exceeding a certain threshold. From an abstract point of
  view, the common difficulty of~\eqref{bpp} and OVP is thus to
  determine a suitable basis for the respective task, out of
  exponentially many candidates.
\end{remark}

\paragraph*{Acknowledgments}

This work was inspired by discussions held at Dagstuhl Seminar 22441
``Optimization at the Second Level''. We thank the anonymous reviewer
for many valuable comments that helped to significantly improve the
paper.

\newpage

\bibliographystyle{plain}

\end{document}